\documentclass[letterpaper, 10 pt, conference]{ieeeconf}  

\IEEEoverridecommandlockouts                              
                                                          
\usepackage{amsmath}
\usepackage{amssymb}
\usepackage{mathtools}

\DeclareMathOperator{\Rank}{rank}

\newtheorem{prop}{Proposition}
\newtheorem{defn}{Definition}
\newtheorem{rem}{Remark}

\title{\LARGE \bf Flatness-Based Controller Design for Backward-Flat Systems}

\author{Johannes Schrotshamer, Bernd Kolar, Markus Sch{\"o}berl
\thanks{This research was funded in whole, or in part, by the Austrian Science Fund (FWF) P36473. For the purpose of open access, the author has applied a CC BY public copyright license to any Author Accepted Manuscript version arising from this submission. All authors are with the Institute of Control Systems, Johannes Kepler University Linz, Altenbergerstrasse 69, 4040 Linz, Austria.\newline {Email: \tt\small johannes.schrotshamer@jku.at, \newline bernd.kolar@jku.at, markus.schoeberl@jku.at}}}

\begin{document}

\maketitle
\thispagestyle{empty}
\pagestyle{empty}

\begin{abstract}                
This paper addresses the design of flatness-based tracking controllers for backward-flat nonlinear discrete-time systems. First, we show that backward-flat discrete-time systems can be constructively obtained from differentially flat continuous-time systems by applying an implicit Euler discretization to a structurally flat triangular representation. Subsequently, two approaches for the exact linearization of backward-flat systems are considered. Besides a dynamic feedback based on prelongations, we show that a linearizing feedback involving lower-order backward-shifts of the flat output can be employed without explicitly implementing the corresponding dynamic extension. This allows the order of the resulting tracking error dynamics to be reduced while requiring only stored values of past system trajectories. Based on the resulting linear input-output representation, flatness-based tracking control laws are derived. The proposed discretization and controller design are illustrated for a 2D gantry crane.
\end{abstract}

\begin{keywords}
Difference flatness, discrete-time systems, nonlinear control systems
\end{keywords}


\section{Introduction}
The concept of differential flatness was introduced in the 1990s by Fliess, L{\'e}vine, Martin, and Rouchon for nonlinear continuous-time systems (see, e.g., \cite{FliessLevineMartinRouchon:1995,FliessLevineMartinRouchon:1999}) and has since become an important tool in nonlinear control theory.
A nonlinear continuous-time system
\begin{equation}\label{eq:sys_cont}
    \dot{x}=f(x,u),
\end{equation}
with an $n$-dimensional state $x$, $m$ inputs $u$, and smooth functions $f^i(x,u)$ is called flat if its solutions $(x(t),u(t))$ are in one-to-one correspondence with the solutions $y(t)$ of a trivial system (arbitrary trajectories that need not satisfy any differential equation).
In other words, this one-to-one correspondence implies the existence of so-called flat outputs $y=\varphi(x,u,\dot{u},\ldots,u^{(q)})$ such that the system variables $x$ and $u$ can be parameterized by $y$ and a finite number of its time derivatives, i.e.,
\begin{equation*}
  (x,u) = F(y,\dot{y},\ldots,y^{(r)}),
\end{equation*}
where $F$ denotes the parameterizing map with respect to the flat output $y$.
These properties considerably simplify motion planning and tracking controller design, which has led to successful applications in a wide range of practical problems.

For practical implementation, however, the continuous-time control law must be evaluated at a sufficiently high sampling rate. If this cannot be ensured, it has been shown, e.g., in \cite{DiwoldKolarSchoberl:2022}, that it can be advantageous to design a discrete-time flatness-based controller directly for a suitable discretization of the continuous-time system.
Regarding the flatness of nonlinear discrete-time systems, known as difference flatness, a concise geometric definition was introduced in \cite{DiwoldKolarSchoberl:2020}. Analogously to the continuous-time case, this definition is based solely on the existence of a one-to-one correspondence between the solutions of a flat system and those of a trivial system. Within this notion, both the flat output and the flat parameterization may depend on forward- as well as backward-shifts; see also \cite{GuillotMillerioux2020,Kaldmae:2021}. To the best of our knowledge, this represents the most general definition of difference flatness available in the literature. Although general verifiable necessary and sufficient conditions are currently not available for this class of systems, flatness-based tracking controllers have been developed in \cite{KolarDiwoldGstottnerSchoeberl:2022}.

As discussed in \cite{SchrotshamerKolarSchoeberl:2025}, this general notion of difference flatness contains forward-flat and backward-flat systems as important subclasses. Forward-flat systems, which have been studied, e.g., in \cite{Sira-RamirezAgrawal:2004,KaldmaeKotta:2013,KolarKaldmaeSchoberlKottaSchlacher:2016}, are already supported by an extensive literature including necessary and sufficient conditions established in \cite{KolarDiwoldSchoberl:2023,KolarSchrotshamerSchoberl:2025} and tracking control methods developed in \cite{DiwoldKolarSchoberl:2022}. Furthermore, it has been shown in \cite{DiwoldKolarSchoberl:2022} that a flat continuous-time system \eqref{eq:sys_cont} can be discretized in a flatness-preserving manner by first transforming it into a structurally flat triangular form and subsequently applying an explicit Euler discretization. The resulting discrete-time system is forward-flat, whereas the flatness property is, in general, not preserved under exact or approximate discretizations. In contrast, the literature on backward-flat systems is far less developed, where a systematic approach for testing backward-flatness was only recently introduced in \cite{SchrotshamerKolarSchoeberl:2026}.

To address this gap, in this contribution we first show that backward-flat systems can be constructively derived from differentially flat continuous-time systems \eqref{eq:sys_cont} through a flatness-preserving discretization based on the implicit Euler method, building on the results of \cite{DiwoldKolarSchoberl:2023}. Moreover, as the main contribution of this paper, we demonstrate the design of tracking control laws for backward-flat systems by first exactly linearizing the corresponding system equations and subsequently applying an additional feedback to stabilize the reference trajectories.

Since backward-flat systems form a subclass of flat systems in the general sense, it may seem natural to directly apply the results of \cite{KolarDiwoldGstottnerSchoeberl:2022} for deriving a tracking controller for backward-flat systems.
However, the main focus of \cite{KolarDiwoldGstottnerSchoeberl:2022} is on reducing the order of the error dynamics by lowering the order of the forward-shifts of the flat output used in the construction of the linearizing feedback. The analogous possibility of using lower-order backward-shifts of the flat output for the linearizing feedback and the subsequent tracking controller design was not considered. Consequently, for backward-flat systems, the order of the resulting tracking error dynamics is determined by the highest-order backward-shifts occurring in the parameterization of the system variables.
In the present contribution, we show that backward-flat systems can be exactly linearized either by means of a dynamic feedback based on the results of \cite{SchrotshamerKolarSchoeberl:2026b} or by using lower-order backward-shifts of the flat output, with the latter approach reducing the order of the resulting tracking error dynamics.

The paper is organized as follows: In Section~\ref{sec:two}, we recall the notions of difference flatness and backward-flatness and show that the implicit Euler discretization of a continuous-time system in structurally flat triangular form yields a backward-flat discrete-time system. Section~\ref{sec:three} addresses the controller design. We first discuss the exact linearization of backward-flat systems, either by an endogenous dynamic feedback in the form of prelongations or by a generalized state transformation based on stored past values of the system trajectory. We then derive tracking control laws that lead to error dynamics of different orders. In Section~\ref{sec:four}, the results are illustrated by the example of a 2D gantry crane.

\paragraph*{Notation}
    To enhance readability, we commonly use the superscript $+$ to indicate a one-fold forward-shift of a variable, whereas higher-order forward-shifts and backward-shifts are written using bracketed subscripts.
    Thus, for $\alpha \geq 0$, $u_{[\alpha]}$ denotes the $\alpha$-th forward-shift of the variable $u$, and $u_{[-\alpha]}$ its $\alpha$-th backward-shift.
    The corresponding forward- and backward-shift operators, denoted by $\delta^{\alpha}$ and $\delta^{-\alpha}$, act on variables as well as on functions; see, e.g., \cite{DiwoldKolarSchoberl:2020} for details.
    If a variable consists of several components, i.e., $y = (y^1,\ldots,y^m)$, we write $y_{[\alpha]} = (y^1_{[\alpha]},\ldots,y^m_{[\alpha]})$.\\
    Shifts of different orders for the individual components are represented compactly by multi-indices. 
    For two multi-indices $S_1 = (s_{1,1},\ldots,s_{1,m})$ and $S_2 = (s_{2,1},\ldots,s_{2,m})$, $y_{[-S_1]}$ and $y_{[S_2]}$ denote the corresponding backward- and forward-shifts of the individual components of $y$. Furthermore, $y_{[-S_1,S_2]}$ is an abbreviation for $y$ together with its backward-shifts up to the orders $S_1$ and its forward-shifts up to the orders $S_2$.
    We write $S_1 \leq S_2$ if $s_{1,j} \leq s_{2,j}$ holds for all $j=1,\ldots,m$, and $y^j_{[s_{1,j},s_{2,j}]}$ is defined to be empty whenever $s_{1,j} > s_{2,j}$.
    Addition and subtraction of two multi-indices $S_1$ and $S_2$, as well as of a multi-index $S_1$ and a scalar $a$, are performed component-wise, i.e.,
    \begin{equation*}
      \begin{aligned}
        S_1 \pm S_2 &= (s_{1,1} \pm s_{2,1},\ldots,s_{1,m} \pm s_{2,m}), \\
        S_1 \pm a   &= (s_{1,1} \pm a,\ldots,s_{1,m} \pm a)\,.
      \end{aligned}
    \end{equation*}
    The sum over all components of a multi-index $S_1$ is denoted by
    \begin{equation*}
      \#S_1 = \sum_{j=1}^{m} s_{1,j}\,.
    \end{equation*}

\section{Discrete-Time Systems and Flatness} \label{sec:two}

    \subsection{Flatness of discrete-time systems}

    In this contribution, we consider nonlinear time-invariant discrete-time systems of the form
    \begin{equation}\label{eq:sysEq}
        x^{i,+}=f^{i}(x,u)\,,\quad i=1,\dots,n
    \end{equation}
    with $\dim(x)=n$, $\dim(u)=m$, and smooth functions $f^{i}(x,u)$. Additionally, we assume that the system equations \eqref{eq:sysEq} satisfy the submersivity condition $\Rank(\partial_{(x,u)}f)=n$, which is a common assumption in the discrete-time literature, as it is necessary for accessibility and consequently also for flatness.
    Moreover, due to this submersivity condition, there always exist $m$ functions $g(x, u)$ such that the map
    \begin{equation*}\label{eq:sysEq_ext}
        x^{+} = f(x, u)\,,\quad \zeta = g(x, u)
    \end{equation*}
    is locally a diffeomorphism, with its inverse given by
    \begin{equation}\label{eq:sysEq_ext_inv}
        x=\psi_x(x^+,\zeta)\,, \quad u=\psi_u(x^+,\zeta)\,.
    \end{equation}
    Following \cite{DiwoldKolarSchoberl:2020}, we call a discrete-time system \eqref{eq:sysEq} flat if there exists a one-to-one correspondence between its solutions $(x(k),u(k))$ and solutions $y(k)$ of a trivial system (arbitrary trajectories that need not satisfy any difference equation) with the same number of inputs. As formally stated in Definition 1 of \cite{DiwoldKolarSchoberl:2020}, the one-to-one correspondence implies that the values of $x(k)$ and $u(k)$ at a given time step may depend on an arbitrary but finite number of past and future values of $y(k)$, and conversely, $y(k)$ may depend on an arbitrary but finite number of past and future values of $x(k)$ and $u(k)$. However, throughout this contribution, we use the slightly modified yet equivalent formulation as stated in \cite{SchrotshamerKolarSchoeberl:2025}.
    \vspace{0.05cm}

	\begin{defn}
		\label{def:flatness}The system \eqref{eq:sysEq} is said to be flat around an equilibrium $(x_{0},u_{0})$, if the $n+m$ coordinate functions $x$ and $u$ can be expressed locally by an $m$-tuple of functions
		\begin{equation}
			y^{j}=\varphi^{j}(\zeta_{[-Q_1,-1]},x,u,u_{[1,Q_2]})\,,\quad\,j=1,\ldots,m\label{eq:flat_output}
		\end{equation}
		called the flat output, and its backward- and forward-shifts up to some finite order, i.e.
    	\begin{equation}\label{eq:flat_param}
    		\begin{aligned}
    			x&=F_{x}(y_{[-R_1,R_2-1]})\,, &\; u&=F_{u}(y_{[-R_1,R_2]})\,.
    		\end{aligned}
    	\end{equation}
        \vspace{-0.3cm}
	\end{defn}
    For a given flat output \eqref{eq:flat_output}, the parameterization \eqref{eq:flat_param} of the state- and input variables $x$ and $u$ is unique.\\ 
    As stated in \cite{SchrotshamerKolarSchoeberl:2025}, this general definition of difference flatness gives rise to two important special cases: forward-flatness (which we do not cover in this paper) and backward-flatness. The latter arises naturally when no forward-shifts appear in the flat output \eqref{eq:flat_output} and $R_2=(0,\ldots,0)$ holds for the parameterization \eqref{eq:flat_param}.
    \vspace{0.05cm}
	\begin{defn}
		\label{def:backward-flatness}The system \eqref{eq:sysEq} is said
		to be backward-flat around an equilibrium $(x_{0},u_{0})$, if the
		$n+m$ coordinate functions $x$ and $u$ can be expressed locally
		by an $m$-tuple of functions
		\begin{equation*}
			y^{j}=\varphi^{j}(\zeta_{[-Q_1,-1]},x,u)\,,\quad j=1,\ldots,m\label{eq:backward-flat_output}
		\end{equation*}
		called the backward-flat output, and its backward-shifts up to some finite order, i.e.
        \begin{equation}\label{eq:backward_flat_param}
    		\begin{aligned}
    			x&=F_{x}(y_{[-R_1,-1]})\,, &\; u&=F_{u}(y_{[-R_1,0]})\,.
    		\end{aligned}
	    \end{equation}
        \vspace{-0.3cm}
	\end{defn}

    \subsection{Implicit Euler Discretization and Backward-Flatness}\label{subsec:flatness_preserving_disc}
    In this following, we show that backward-flat systems can be constructively derived from differentially flat continuous-time systems \eqref{eq:sys_cont} via a flatness-preserving discretization based on the implicit Euler method, building upon the results of \cite{DiwoldKolarSchoberl:2023}. Therein, the authors employ flat continuous-time systems in so called structurally flat triangular form        
    \begin{align}
    \begin{aligned}\dot{{x}}_{p} & =f_{p}({x}_{p},{x}_{p-1},{u}_{p-1})\\
    \dot{{x}}_{p-1} & =f_{p-1}({x}_{p},{x}_{p-1},{x}_{p-2},{u}_{p-1},{u}_{p-2})\\
    & \phantom{a}\vdots\\
    \dot{{x}}_{2} & =f_{2}({x}_{p},\dots,{x}_{1},{u}_{p-1},\dots,{u}_{1})\\
    \dot{{x}}_{1} & =f_{1}({x}_{p},\dots,{x}_{1},{u}_{p-1},\dots,{u}_{1},{u}_{0})
    \end{aligned}
    \label{eq:structurally_flat_triangular_form_continuous_time_system}
    \end{align}
    with the state
    \begin{equation}
    \label{eq:def_x_blocks_struct_flat_triangular_form_cont}
    \begin{aligned}
    {x} & =({x}_{p},{x}_{p-1}\dots,{x}_{1})\\
    & =(y_{p},(y_{p-1},\hat{x}_{p-1}),\dots,(y_{1},\hat{x}_{1}))
    \end{aligned}
    \end{equation}
    and the input ${u}=({u}_{p-1},\dots,{u}_{0})$ for some $p>0$ (the components $y_{k}$ and/or ${u}_{k}$ might also be empty). Due to the rank conditions
    \begin{align*}
    \begin{aligned}\mathrm{rank}(\partial_{{u}_{0}}f_{1}) & =\dim(f_{1})=\dim({u}_{0})\\
    \textrm{\ensuremath{\mathrm{rank}}}(\partial_{(\hat{x}_{k},{u}_{k})}f_{k+1}) & =\dim(f_{k+1})=\dim((\hat{x}_{k},{u}_{k}))
    \end{aligned}
    \label{eq:rank_cond}
    \end{align*}
    for $k=1,\dots,p-1$ and its triangular structure, for smooth trajectories $y(t)=(y_p(t), \ldots, y_1(t))$, the corresponding trajectories $(x(t), u(t))$ can be determined systematically by successively parameterizing the variables $(x,u)$ in terms of $y$ and its time derivatives, proceeding through the equations \eqref{eq:structurally_flat_triangular_form_continuous_time_system} from top to bottom.
    As stated in \cite{DiwoldKolarSchoberl:2023}, applying the implicit Euler discretization
    \begin{equation*}\label{eq:implicit_euler_discretization}
        x^+=x+T_s f(x^+,u)
    \end{equation*}
    to a system in structurally flat triangular form \eqref{eq:structurally_flat_triangular_form_continuous_time_system} yields a discrete-time system
    \begin{align}\label{eq:impl_disk}
    \begin{aligned}{{x}}^+_{p} & =x_{p}+T_sf_{p}({x}^+_{p},{x}^+_{p-1},{u}_{p-1})\\
    {{x}}^+_{p-1} & =x_{p-1}+T_sf_{p-1}({x}^+_{p},{x}^+_{p-1},{x}^+_{p-2},{u}_{p-1},{u}_{p-2})\\
    & \phantom{a}\vdots\\
    {{x}}^+_{2} & =x_{2}+T_sf_{2}({x}^+_{p},\dots,{x}^+_{1},{u}_{p-1},\dots,{u}_{1})\\
    {{x}}^+_{1} & =x_1+T_sf_{1}({x}^+_{p},\dots,{x}^+_{1},{u}_{p-1},\dots,{u}_{1},{u}_{0})
    \end{aligned}
    \end{align}
    that, together with \eqref{eq:def_x_blocks_struct_flat_triangular_form_cont} and
    \begin{align}
    \begin{aligned}\mathrm{rank}(\partial_{{u}_{0}}f_{1}) & =\dim(f_{1})=\dim({u}_{0})\\
    \textrm{\ensuremath{\mathrm{rank}}}(\partial_{(\hat{x}_{k}^+,{u}_{k})}f_{k+1}) & =\dim(f_{k+1})=\dim((\hat{x}_{k}^+,{u}_{k}))\,,
    \end{aligned}
    \label{eq:rank_cond_disc}
    \end{align}
    still exhibits a triangular structure. Thus, similar to the continuous-time case, the variables $(x,u)$ can once again be parameterized successively by proceeding through the equations \eqref{eq:impl_disk} from top to bottom, using \eqref{eq:def_x_blocks_struct_flat_triangular_form_cont}.
    However, the system trajectories $(x(k),u(k))$ depend on both backward- and forward-shifts of $y=(y_p, \ldots, y_1)$. Consequently, the authors of \cite{DiwoldKolarSchoberl:2023} stated that if a continuous-time system \eqref{eq:sys_cont} can be transformed into a structurally flat triangular form \eqref{eq:structurally_flat_triangular_form_continuous_time_system} via a state- and input transformation
    \begin{subequations}\label{eq:trafo_cont}
        \begin{align}
            \bar x &= \Phi_x(x) \label{subeqn:disk_trafo_x}  \\
            \bar u &= \Phi_u(x,u) \label{subeqn:disk_trafo_u}\,,
        \end{align}
    \end{subequations}
    then the implicit Euler discretization of the system $\dot{\bar{x}}=f(\bar{x}, \bar{u})$ is flat according to Definition \ref{def:flatness}. Moreover, since the input trajectories of the original and transformed input are related via the map $\bar{u}=\Phi_u(x^{+}, u)$, the implicit Euler discretization of the system $\dot{\bar{x}}=f(\bar{x}, u)$, obtained by applying only the state transformation \eqref{subeqn:disk_trafo_x}, is also flat.
    
    We propose that by using $\tilde y=(\tilde y_p, \ldots, \tilde y_1)=(y_p^+, \ldots, y_1^+)$ as the flat output instead, an even stronger conclusion can be drawn from the results in \cite{DiwoldKolarSchoberl:2023}.
    \begin{prop}
        If a continuous-time system \eqref{eq:sys_cont} can be transformed into a structurally flat triangular form \eqref{eq:structurally_flat_triangular_form_continuous_time_system} via a state- and input transformation \eqref{eq:trafo_cont}, then the implicit Euler discretization of the system $\dot{\bar{x}}=f(\bar{x}, u)$ is backward-flat according to Definition \ref{def:backward-flatness}.
    \end{prop}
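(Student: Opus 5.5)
We first treat the implicit Euler discretization \eqref{eq:impl_disk} of the transformed system $\dot{\bar x}=f(\bar x,\bar u)$ and only afterwards return to the original input $u$. We take $\tilde y=(y_p^+,\ldots,y_1^+)$ as candidate flat output. This means $\tilde y$ is the first block $\bar x_p$ together with the components $y_k$ of $\bar x_k$, all evaluated at $x^+$. Since $x^+=f(x,u)$ and $\zeta=g(x,u)$, the map \eqref{eq:sysEq_ext_inv} shows that $x^+$ is a function of $(x,u)$. Hence $\tilde y=\varphi(x,u)$ is of the admissible form $\varphi(\zeta_{[-Q_1,-1]},x,u)$ with $Q_1$ empty. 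It then remains to show that $x=F_x(\tilde y_{[-R_1,-1]})$ and $u=F_u(\tilde y_{[-R_1,0]})$, i.e., that no forward-shifts of $\tilde y$ are needed.

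The core step is an induction down the triangular form \eqref{eq:impl_disk}, stated in terms of $x^+$. The claim is that $x_p^+,\ldots,x_{k}^+$ and $u_{p-1},\ldots,u_{k}$ can be expressed by $\tilde y$ and finitely many of its \emph{backward}-shifts. For the base case, note that $x_p^+=\tilde y_p$ and $x_p=\delta^{-1}(\tilde y_p)=\tilde y_{p,[-1]}$. The first equation of \eqref{eq:impl_disk} reads
\begin{equation*}
\tilde y_p=\tilde y_{p,[-1]}+T_sf_p(\tilde y_p,(\tilde y_{p-1},\hat x^+_{p-1}),u_{p-1}).
\end{equation*}
By the rank condition \eqref{eq:rank_cond_disc}, the implicit function theorem gives $(\hat x^+_{p-1},u_{p-1})$ as functions of $(\tilde y_p,\tilde y_{p,[-1]},\tilde y_{p-1})$. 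Then $x^+_{p-1}$ is known, and so is $x_{p-1}=\delta^{-1}(x^+_{p-1})$, in terms of $\tilde y$ and its backward-shifts. For the induction step, the equation for $x^+_{k+1}$ involves $x_{k+1}^+$ and $x_{k+1}=\delta^{-1}x^+_{k+1}$, which are already known in backward terms. It also involves $x^+_{p},\ldots,x^+_{k+2}$ and $u_{p-1},\ldots,u_{k+1}$, which are already known. The only unknowns are $(\hat x^+_k,u_k)$, while $y_k^+=\tilde y_k$ is given. The rank condition again solves for $(\hat x_k^+,u_k)$. At the end, $f_1$ is solved for $u_0$. The result is $x^+=\Phi(\tilde y_{[-R_1+1,0]})$ and $u=F_{\bar u}(\tilde y_{[-R_1,0]})$. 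Applying $\delta^{-1}$ gives $x=\Phi(\tilde y_{[-R_1,-1]})$, which has exactly the required form.

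To pass to the system $\dot{\bar x}=f(\bar x,u)$, we use the relation $\bar u=\Phi_u(x^+,u)$ stated in the paper. Since $\Phi_u$ is invertible in $u$, we get $u=\Phi_u^{-1}(x^+,\bar u)$. Both $x^+$ and $\bar u$ are functions of $\tilde y_{[-R_1,0]}$, so $u$ is as well. The flat output $\tilde y$ depends only on $\bar x^+$, which is again a function of $(\bar x,u)$. Thus Definition~\ref{def:backward-flatness} is satisfied.

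The main obstacle is a bookkeeping issue in the induction. In \eqref{eq:impl_disk}, the right-hand sides contain the \emph{unshifted} $x_k$ alongside $x^+$-variables, and we must make sure that every such $x_k$ is reduced to a backward-shift of an already-determined $x_k^+$. A naive top-down parameterization in $x$ rather than $x^+$ would produce forward-shifts of $y$, which is exactly what happens in \cite{DiwoldKolarSchoberl:2023}. Anchoring the induction at $x^+$ avoids this, so that only applications of $\delta^{-1}$ occur. The implicit-function-theorem steps are local and require the rank conditions at the equilibrium, which we assume.
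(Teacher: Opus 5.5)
Your proposal is correct and follows essentially the same route as the paper. Both take the shifted output $\tilde y=(y_p^+,\ldots,y_1^+)$ and pass top to bottom through \eqref{eq:impl_disk}, solving each block for $(\hat x_k^+,u_k)$ via \eqref{eq:rank_cond_disc} and applying one backward-shift to recover the unshifted states. You spell out the induction and the return from $\bar u$ to $u$ via $\bar u=\Phi_u(x^+,u)$, both of which the paper's proof leaves implicit. One small correction: $x^+$ being a function of $(x,u)$ comes from locally solving the implicit Euler equations for $x^+$ (for small $T_s$), not from \eqref{eq:sysEq_ext_inv}.
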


    \begin{proof}
        By defining $\tilde y=(y_p^+, \ldots, y_1^+)$ as the new flat output, the backward-shifts $\tilde y_{p,[-1]}, \tilde y_{p-1,[-1]},\dots,\tilde y_{1,[-1]}$ now represent components of the state, i.e.,
        \begin{equation*}
            \begin{aligned}
            {x} & =({x}_{p},{x}_{p-1}\dots,{x}_{1})\\
            & =(\tilde y_{p,[-1]},(\tilde y_{p-1,[-1]},\hat{x}_{p-1}),\dots,(\tilde y_{1,[-1]},\hat{x}_{1}))\,.
            \end{aligned}
        \end{equation*}
        Therefore, by the rank condition \eqref{eq:rank_cond_disc} for $k=p-1$, the topmost set of equations
        \begin{equation*}
            \tilde y_p = \tilde y_{p,[-1]} + T_s f_p(\tilde y_p, \tilde y_{p-1}, \hat{x}_{p-1}^+, u_{p-1})
        \end{equation*}
        can be solved for $(\hat{x}_{p-1}^+,u_{p-1})$ as functions of $\tilde y_{p,[-1]}$, $\tilde y_p$, and $\tilde y_{p-1}$. To parameterize the non-shifted state variables $\hat{x}_{p-1}$, an additional backward-shift is required. Consequently, $\hat{x}_{p-1}$ depends on $\tilde y_{p,[-2]}$, $\tilde y_{p,[-1]}$, and $\tilde y_{p-1,[-1]}$. Since the remaining components of $x_{p-1}$ are given by $\tilde y_{p-1,[-1]}$, the variables $(x_{p-1},u_{p-1})$ are completely parameterized by $\tilde y_{p,[-2]}, \tilde y_{p,[-1]}, \tilde y_p, \tilde y_{p-1,[-2]}, \tilde y_{p-1,[-1]}$.
        Proceeding recursively with the remaining blocks of equations ultimately yields a parameterization of the form~\eqref{eq:backward_flat_param}.
    \end{proof}

\section{Controller Design} \label{sec:three}
    In this section, we demonstrate the design of a flatness-based tracking controller for backward-flat systems, which is divided into two steps. First, the system is exactly linearized, and subsequently, an additional feedback is applied to stabilize the reference trajectories.


    \subsection{Exact linearization}
    It was established in \cite{DiwoldKolarSchoberl:2020} that an exact linearization of flat discrete-time systems in the sense of Definition \ref{def:flatness} by an endogenous dynamic feedback is always possible. Since backward-flat systems according to Definition \ref{def:backward-flatness} constitute a subclass of difference flat systems, these results apply directly to the backward-flat case. In particular, for every backward-flat discrete-time system $x^{+}=f(x, u)$, there exists an endogenous dynamic feedback of the form
    \begin{equation}\label{eq:linearizing_feedback_general}
        \begin{aligned}
        z^{+} & =\delta_y(F_z) \circ \hat{\Phi}(x, z, v) \\
        u & =F_u \circ \hat{\Phi}(x, z, v)
        \end{aligned}
    \end{equation}
    with $\operatorname{dim}(z)=\big(\sum_{j=1}^m r_j\big)-n$, such that the resulting closed-loop system
    \begin{equation}\label{eq:sys_dyn_ext}
        \begin{aligned}
        x^{+} & =f\big(x, F_u \circ \hat{\Phi}(x, z, v)\big) \\
        z^{+} & =\delta_y(F_z) \circ \hat{\Phi}(x, z, v)
        \end{aligned}
    \end{equation}
    exhibits the input-output behavior $y_{[0]}=v$, where the map $\Phi(y_{[-R_1,0]})$ is defined as
    \begin{equation*}
        \begin{aligned}
        x & =F_x(y_{[-R_1,-1]}) \\
        z & =F_z(y_{[-R_1,-1]}) \\
        v & =y_{[0]}
        \end{aligned}
    \end{equation*}
    with its inverse $\hat{\Phi}(x, z, v)$ given by
    \begin{equation}\label{eq:Phi_hat}
        \begin{aligned}
        (y_{[-R_1,-1]}) & =\hat{F}_{x z}(x, z) \\
        y_{[0]} &= v \,.
        \end{aligned}
    \end{equation}
    Due to the submersion property of the parameterization $F_x$, there always exists a map $z = F_z(y_{[-R_1,-1]})$, such that the combined mapping $(x, z) = (F_x, F_z) =: F_{xz}$ is a diffeomorphism.
    
    As shown in \cite{SchrotshamerKolarSchoeberl:2025} and \cite{SchrotshamerKolarSchoeberl:2026}, every backward-flat system admits a backward-flat output of the form
    \begin{equation}\label{eq:y_backward}
    y^j=\varphi^j(x,u),
    \qquad j=1,\ldots,m,
    \end{equation}
    that is independent of the variables $\ldots,\zeta_{[-2]},\zeta_{[-1]}$ and satisfies $\operatorname{rank}(\partial_u \varphi_{[0]}) = m$\footnote{The rank condition $\operatorname{rank}(\partial_u \varphi_{[0]}) = m$ must hold in general because substituting the map \eqref{eq:sysEq_ext_inv} into the backward-shifts of \eqref{eq:y_backward} yields functions of $x$ and $\ldots,\zeta_{[-2]},\zeta_{[-1]}$ only. Otherwise, not all components of $u$ could be expressed in terms of $y_{[-R_1,0]}$.}. Consequently, the invertible input transformation in \eqref{eq:linearizing_feedback_general} reduces to
    \begin{equation}\label{eq:linearizing_feedback_reduced}
        u = \hat \varphi_{[0]}(x, v)\,.
    \end{equation}
    
    Moreover, it has recently been shown in \cite{SchrotshamerKolarSchoeberl:2026b} that there exists a coordinate transformation 
    \begin{equation}\label{eq:generalized_trafo}
        \zeta_{[-1]}=\hat{\Phi}_\zeta(x,\bar \zeta_{[-R_1+\lambda-1,-1]})
    \end{equation}
    of the variables $\zeta_{[-1]}$ under which the backward-shifts $y_{[-R_1,0]}$ of the backward-flat output take the form
    \begin{equation}\label{eq:y_backward_transformed}
            \begin{aligned}
            y_{1,[-r_{1,1},-\lambda_1]} &= \bar\zeta_{1,[-r_{1,1}+\lambda_1-1,-1]} \\
            y_{1,[-\lambda_1+1, 0]} &= \varphi_{1,[-\lambda_1+1, 0]}(x,u) \\
            &\hspace{0.2cm}\vdots\\
            y_{s-1,[-r_{1,s-1},-\lambda_{s-1}]} &= \bar\zeta_{s-1,[-r_{1,s-1}+\lambda_{s-1}-1,-1]} \\
            y_{s-1,[-\lambda_{s-1}+1, 0]} &= \varphi_{s-1,[-\lambda_{s-1}+1, 0]}(\bar\zeta_{1,[-r_{1,1}+\lambda_1-1,-1]},\\
            &\ldots,\bar\zeta_{s-2,[-r_{1,s-2}+\lambda_{s-2}-1,-1]},x,u) \\
            y_{s,[-r_{1,s}, -\lambda_s]} &= \bar\zeta_{s,[-r_{1,s}+\lambda_{s}-1,-1]} \\
            y_{s,[-\lambda_s+1, 0]} &= \varphi_{s,[-\lambda_s+1, 0]}(\bar\zeta_{1,[-r_{1,1}+\lambda_1-1,-1]},\\
            &\ldots,\bar\zeta_{s-1,[-r_{1,s-1}+\lambda_{s-1}-1,-1]},x,u)\,,
            \end{aligned}
        \end{equation}
    The transformation \eqref{eq:generalized_trafo} is constructed step by step, over a total of $s$ steps. In the first step, the components of the backward-flat output \eqref{eq:y_backward} are shifted backward until they depend explicitly on the coordinates $\zeta_{[-1]}$, and as many of these functions as possible are introduced as new coordinates $\bar\zeta_{1,[-1]}$ with $\operatorname{dim}(\bar\zeta_{1,[-1]})=m_1$. In each subsequent step $i=2,\ldots,s$, the rest of the output components are shifted further until they depend explicitly on the remaining components of $\zeta_{[-1]}$, and a maximal set of $m_i$ such functions is introduced as new coordinates $\bar\zeta_{i,[-1]}$\footnote{The variables $m_i$, $i=1,\ldots,s$, denote the maximal number of functions that can be normalized in the corresponding step via an invertible coordinate transformation $\zeta_{i,[-1]}=\hat{\Phi}_{\zeta_i}(x,\bar\zeta_{1,[-r_{1,1}+\lambda_1-1,-1]},\ldots,\bar\zeta_{i-1,[-r_{1,i-1}+\lambda_{i-1}-1,-1]},\bar\zeta_{i,[-1]})$. The abbreviation of all transformations carried out in the $s$ steps is then given by \eqref{eq:generalized_trafo}.}. Repeating this procedure until all $m=m_1+\ldots+m_s$ components of $\zeta_{[-1]}$ have been replaced yields a minimal multi-index $\lambda=(\lambda_1,\ldots,\lambda_s)$ such that $\bar\zeta_{[-1]}=\delta^{-\lambda}(\varphi)$.
    Due to this stepwise construction where in every step $m_i$ new coordinates are introduced, it follows that $y_{i}=(y_i^1,\ldots,y_i^{m_i})$, $\varphi_{i}=(\varphi_i^1,\ldots,\varphi_i^{m_i})$, $\zeta_{i}=(\zeta_i^1,\ldots,\zeta_i^{m_i})$, $r_{1,i}=(r_{1,i}^1,\ldots,r_{1,i}^{m_i})$ and $\lambda_{i}=(\lambda_{i}^1,\ldots,\lambda_{i}^{m_i})$.\\    
    In \cite{SchrotshamerKolarSchoeberl:2026b}, it has been shown, by proving that \eqref{eq:y_backward_transformed} is a diffeomorphism, that backward-flat systems can be rendered static feedback linearizable through prelongations of the variables $\bar\zeta_{1,[-r_{1,1}+\lambda_1-1,-1]},\ldots,\bar\zeta_{s-1,[-r_{1,s-1}+\lambda_{s-1}-1,-1]}$. For a detailed derivation of \eqref{eq:y_backward_transformed} and the construction of prelongations that render a backward-flat system static feedback linearizable, we refer to \cite{SchrotshamerKolarSchoeberl:2026b}.\\
    This result, in particular, provides a systematic construction of the dynamic extension \eqref{eq:linearizing_feedback_general} consisting of simple shift-chains. As mentioned in \cite{SchrotshamerKolarSchoeberl:2026b}, the closed-loop system \eqref{eq:sys_dyn_ext} can be transformed into Brunovsky normal form
    \begin{equation}\label{eq:Brunovsky_dyn}
    \begin{array}{rlcrl}
    y^{+}_{1,[-r_{1,1}]} = & \hspace{-0.2cm} y_{1,[-r_{1,1}+1]} & \cdots & y^{+}_{s,[-r_{1,s}]} = & \hspace{-0.2cm} y_{s,[-r_{1,s}+1]} \\
     \vdots \hspace{0.1cm} & & & \vdots \hspace{0.1cm} & \\
    y^{+}_{1,[-1]} = & \hspace{-0.2cm} v_{1} & \cdots & y^{+}_{s,[-1]} = & \hspace{-0.2cm} v_{s}
    \end{array}
    \end{equation}
    with $v_i=(v_i^1,\ldots,v_i^{m_i})$ via the classical state- and input transformation \eqref{eq:Phi_hat}.
    The resulting Brunovsky normal form \eqref{eq:Brunovsky_dyn} consists of shift chains associated with the individual components of the flat output with the corresponding lengths $r_{1,i}^{j_i}$. Consequently, the sum of the lengths of all shift chains is $\#R_1$. As will become relevant for the subsequent tracking controller design, the order of the resulting tracking error dynamics coincides with this sum.
    
    In the following, we show that the dynamic extension can be omitted, which results in shift-chains of total length $\#(\lambda-1)=n\leq\#R_1$ and, in turn, allows for tracking error dynamics of lower order $\#A$, with $A\leq R_1$.
    Since the values of $\bar\zeta_{[-R_1+\lambda-1,-1]}$ are already available from past measurements, the implementation as a dynamic feedback is not necessary -- these values can simply be stored instead. However, if the dynamic extension by the variables $\bar\zeta_{[-R_1+\lambda-1,-1]}$ in the form of prelongations is omitted, the system can no longer be transformed into Brunovsky normal form by means of a classical state- and input transformation.
    Nevertheless, since $\#(\lambda-1)=n$ according to Proposition~1 (iii) in \cite{SchrotshamerKolarSchoeberl:2026b}, the map \eqref{eq:y_backward_transformed} induces the generalized state transformation
    \begin{equation}\label{eq:generalized_state_trafo}
        \tilde{x}=\varphi_{[-\lambda+1,-1]}(\bar\zeta_{[-R_1+\lambda-1,-1]},x)
    \end{equation}
    with $\operatorname{rank}(\partial_x \varphi_{[-\lambda+1,-1]})=n$ and 
    \begin{equation*}
    \begin{aligned}
        \tilde{x}
        ={}&
        \big[
            y^1_{1,[-\lambda_1+1,-1]},
            \ldots,
            y^{m_1}_{1,[-\lambda_1+1,-1]},
            \ldots,\\
            &\hspace{1cm}
            y^1_{s,[-\lambda_s+1,-1]},
            \ldots,
            y^{m_s}_{s,[-\lambda_s+1,-1]}
        \big]^T\,.
    \end{aligned}
    \end{equation*}
    Together with the linearizing feedback \eqref{eq:linearizing_feedback_reduced}, this generalized state transformation brings a backward-flat system \eqref{eq:sysEq} into the form
    \begin{equation}\label{eq:Brunovsky_red}
    \begin{array}{rlcrl}
        y^{+}_{1,[-\lambda_{1}+1]}
        = & \hspace{-0.2cm} y_{1,[-\lambda_{1}+2]}
        & \cdots &
        y^{+}_{s,[-\lambda_{s}+1]}
        = & \hspace{-0.2cm} y_{s,[-\lambda_{s}+2]} \\
        \vdots \hspace{0.1cm} & & & \vdots \hspace{0.1cm} & \\
        y^{+}_{1,[-1]}
        = & \hspace{-0.2cm} v_{1}
        & \cdots &
        y^{+}_{s,[-1]}
        = & \hspace{-0.2cm} v_{s}\,.
    \end{array}
    \end{equation}

    \begin{rem}
        Since the input-output behavior of the closed-loop system is given by $y_{[0]}=v$, the new input $v$ directly determines the values of the flat output. Hence, perfect tracking of a desired trajectory $y^d$ could be achieved by choosing $v=y^d$, resulting in error dynamics of order $0$.
        However, as demonstrated in the following, this approach is not suitable for practical applications. We therefore focus instead on the design of tracking control laws for systems of the forms \eqref{eq:Brunovsky_dyn} and \eqref{eq:Brunovsky_red}, with error dynamics of orders $\#R_1$ and $\#\lambda-m$, respectively.
    \end{rem}
    

    \subsection{Tracking control design}
    In the following, we first demonstrate the design of a tracking control law based on an exact linearization via a dynamic feedback \eqref{eq:linearizing_feedback_general} with a backward-flat output of the form \eqref{eq:y_backward}.
    Subsequently, we consider the case $A\leq R_1$, in which the closed-loop system additionally depends on the backward-shifts $\bar\zeta_{[-R_1+\lambda,-1]}$. However, unlike the design of tracking control laws for forward-flat systems in \cite{DiwoldKolarSchoberl:2022} or flat systems in the sense of Definition \ref{def:flatness} in \cite{KolarDiwoldGstottnerSchoeberl:2022}, no expressions for forward-shifts of $v$ have to be derived. Instead, for backward-flat systems, only values of past measurements and/or inputs $\bar \zeta_{[-R_1+\lambda, -1]}$ need to be stored.

    \subsubsection{$A=R$}
    Consider a backward-flat system \eqref{eq:sysEq} linearized by a dynamic feedback \eqref{eq:linearizing_feedback_general}, where the resulting closed-loop system \eqref{eq:sys_dyn_ext} is subsequently transformed into Brunovsky normal form \eqref{eq:Brunovsky_dyn}.
    The control law
    \begin{equation}\label{eq:tracking_control_law}
        v_i^{j_i}=y_{i,[0]}^{j_i, d}-\sum_{l=-1}^{-r_{1,i}^{j_i}} a_i^{j_i, l}(y_{i,[l]}^{j_i}-y_{i,[l]}^{j_i, d})\,, \qquad j_i=1,\ldots,m_i
    \end{equation}
    asymptotically stabilizes the tracking error $e_i^{j_i}=y_i^{j_i}-y_{i}^{j_i,d}$ with respect to an arbitrary reference trajectory $y_i^{j_i,d}(k)$ by placing the eigenvalues of the linear error dynamics
    \begin{equation*}
    e_{i,[0]}^{j_i}+\sum_{l=-1}^{-r_{1,i}^{j_i}}a_{i}^{j_i,l}e_{i,[l]}^{j_i}=0\label{eq:error_dynamics}
    \end{equation*}
    inside the unit circle.
    Substituting the map \eqref{eq:Phi_hat} into \eqref{eq:tracking_control_law} and inserting the resulting expression into \eqref{eq:linearizing_feedback_general} finally yields a tracking control law of the form
    \begin{equation*}
        \begin{aligned}
            z^{+} & =\alpha(x,z,y^d_{[-R_1,0]})\\
            u & =\beta(x,z,y^d_{[-R_1,0]})\,,
        \end{aligned}
    \end{equation*}
    which depends on the state $x$ of the system \eqref{eq:sysEq}, the controller state $z$ and the reference trajectory $y^d$ and its backward-shifts.

    
    \vspace{1ex}
    \subsubsection{$A=\lambda-1$}
    Instead, if the system equations \eqref{eq:sysEq} are not extended by a dynamic feedback, but rather transformed into the form \eqref{eq:Brunovsky_red} through a generalized state transformation \eqref{eq:generalized_state_trafo} and the input transformation \eqref{eq:linearizing_feedback_reduced}, the control law
    \begin{equation}\label{eq:tracking_control_law_lambda}
        v_i^{j_i}=y_{i,[0]}^{j_i, d}-\sum_{l=-1}^{-\lambda_i^{j_i}+1} a_i^{j_i, l}(y_{i,[l]}^{j_i}-y_{i,[l]}^{j_i, d})\,,
    \end{equation}
    no longer contains tracking-error terms associated with
    $\bar\zeta_{[-R_1+\lambda-1,-1]}$ and
    $y^d_{[-R_1,-\lambda]}$. Consequently, the corresponding linear error dynamics
    \begin{equation}\label{eq:error_dynamics_lambda}
            e_{i,[0]}^{j_i}+\sum_{l=-1}^{-\lambda_i^{j_i}+1} a_i^{j_i, l} e_{i,[l]}^{j_i}=0\,,
    \end{equation}
    is now of order $\#\lambda-m=n$.\\
    Substituting the control law \eqref{eq:tracking_control_law_lambda} into the linearizing feedback \eqref{eq:linearizing_feedback_reduced} yields a tracking control law of the form
    \begin{equation*}
    \begin{aligned}
        u =\beta(x,&\bar\zeta_{1,[-r_{1,1}+\lambda_1,-1]},\ldots,\\
        &\bar\zeta_{s-1,[-r_{1,s-1}+\lambda_{s-1},-1]},y^d_{[-\lambda+1,0]})\,.
    \end{aligned}
    \end{equation*}
    Hence, apart from the state $x$ of the system \eqref{eq:sysEq} and the known reference trajectory $y^d_{[-\lambda+1,0]}$, this tracking control law depends only on past values $\bar\zeta_{1,[-r_{1,1}+\lambda_1,-1]},\ldots,\bar\zeta_{s-1,[-r_{1,s-1}+\lambda_{s-1},-1]}$ of the system trajectory.

    \vspace{1ex}
    \subsubsection{$A<\lambda-1$}
    Next, we consider the case where the multi-index $A$ is chosen even smaller. For $A=0$, all coefficients $a_i^{j_i,l}$ in the control law \eqref{eq:tracking_control_law_lambda} are set to zero. Consequently, all eigenvalues of the tracking error dynamics \eqref{eq:error_dynamics_lambda} are located at the origin of the complex plane, resulting in dead-beat control. Since the input-output behavior of the closed-loop system is given by $y_{[0]}=v$, with $v_i^{j_i}=y_{i,[0]}^{j_i, d}$, perfect tracking is achieved, i.e. $e_i^{j_i}=y_i^{j_i}-y_{i}^{j_i,d}=0$. However, this choice inevitably leads to large control inputs, which may not be realizable due to input constraints.
    For $0<A<\lambda-1$, there still occur eigenvalues located at the origin of the complex plane necessitating large control inputs.
    

\section{Practical Example}  \label{sec:four}
    In this section, we illustrate our results using the practical example of a 2D gantry crane, which has already been studied in \cite{KolarRamsSchlacher:2017,DiwoldKolarSchoberl:2022} in the context of time-optimal flatness-based control and tracking control for forward-flat systems, respectively.
    \\
    Using the flatness-preserving discretization described in Section \ref{subsec:flatness_preserving_disc}, we first derive a backward-flat discrete-time system from the continuous-time model of the 2D gantry crane, which in state representation reads as
    \begin{equation}\label{eq:sys_gantry_crane_cont}
    \begin{array}{rlrl}
    \dot{x}_T \hspace{-0.25cm} & =v_T &\qquad \dot{v}_T \hspace{-0.25cm} & =f_{a_T}(\varphi, \theta, \omega_\theta, F, M) \\
    \dot{\varphi} \hspace{-0.25cm} & =\omega_{\varphi} & \dot{\omega}_{\varphi} \hspace{-0.25cm} & =f_{\alpha_{\varphi}}(\varphi, \theta, \omega_\theta, F, M) \\
    \dot{\theta} \hspace{-0.25cm} & =\omega_\theta & \dot{\omega}_\theta \hspace{-0.25cm} & =f_{\alpha_\theta}(\varphi, \theta, \omega_{\varphi}, \omega_\theta, F, M)
    \end{array}
    \end{equation}
    with the state $x=(x_T, \varphi, \theta, v_T, \omega_{\varphi}, \omega_\theta)$ and the input $u=(F,M)$. It is well known that the system \eqref{eq:sys_gantry_crane_cont} is differentially flat, with the $x$- and $y$-positions of the load serving as the flat output $y=(x_L,y_L)=(x_T-R \varphi \sin (\theta),R \varphi \cos (\theta))$.\\
    As in \cite{DiwoldKolarSchoberl:2022}, the continuous-time system \eqref{eq:sys_gantry_crane_cont} is transformed into structurally flat triangular form
    \begin{equation*}
        \begin{array}{rlrl}
        \dot{x}_L \hspace{-0.25cm} & = v_{L, x} &\qquad \dot{v}_{L, y} \hspace{-0.25cm} & = a_{L, y} \\
        \dot{y}_L \hspace{-0.25cm} & = v_{L, y} & \dot\theta \hspace{-0.25cm} & = \omega_\theta \\
        \dot{v}_{L, x} \hspace{-0.25cm} & = (g-a_{L, y}) \tan (\theta) & \dot{\omega}_\theta \hspace{-0.25cm} & = \alpha_\theta
        \end{array}
    \end{equation*}
    using state- and input transformations $\bar{x}=\Phi_x(x)$ and $\bar{u}=\Phi_u(x,u)$, where $\bar{x}=(x_L,y_L,v_{L,x},v_{L,y},\theta,\omega_\theta)$ denotes the transformed state and $\bar{u}=(a_{L,y},\alpha_\theta)$ the new input.\\
    Applying the implicit Euler discretization yields the sampled-data system
    \begin{equation*}
        \begin{array}{rlrl}
        x_L^{+} \hspace{-0.3cm} & =x_L+T_s v_{L, x}^+ & v_{L, y}^{+} \hspace{-0.3cm} & =v_{L, y}+T_s a_{L, y} \\
        y_L^{+} \hspace{-0.3cm} & =y_L+T_s v_{L, y}^+ & \theta^{+} \hspace{-0.3cm} & =\theta+T_s \omega_\theta^+ \\
        v_{L, x}^{+} \hspace{-0.3cm} & =v_{L, x}+T_s (g-a_{L, y}) \tan (\theta^+) & \omega_\theta^{+} \hspace{-0.3cm} & =\omega_\theta+T_s \alpha_\theta
        \end{array}
    \end{equation*}
    which, according to Theorem 3 in \cite{DiwoldKolarSchoberl:2023}, admits an explicit state representation
    \begin{equation}\label{eq:sampled_data_system_gantry_crane}
        \begin{aligned}
        x_L^{+} & = T_s^2 (g-a_{L, y}) \tan(\alpha_\theta T_s^2+\omega_\theta T_s+\theta)+v_{L, x} T_s+x_L \\
        y_L^{+} & = a_{L, y} T_s^2+v_{L, y} T_s+y_L \\
        v_{L, x}^{+} & = T_s (g-a_{L, y}) \tan(\alpha_\theta T_s^2+\omega_\theta T_s+\theta)+v_{L, x} \\
        v_{L, y}^{+} & = a_{L, y} T_s+v_{L, y} \\
        \theta^{+} & = \alpha_\theta T_s^2+\omega_\theta T_s+\theta \\
        \omega_\theta^{+} & = \alpha_\theta T_s+\omega_\theta
        \end{aligned}
    \end{equation}
    in a neighborhood of the equilibrium $(x_s,u_s)$ for sufficiently small sampling times $T_s>0$. The backward-shifts of the backward-flat output $\tilde y=(x_L^+,y_L^+)=(T_s^2(g- a_{L, y}) \tan ( \alpha_\theta  T_s^2 +\omega_\theta T_s+\theta)+v_{L, x} T_s+x_L,  a_{L, y} T_s^2+v_{L, y} T_s+y_L)$ can be written in the form \eqref{eq:y_backward_transformed}, i.e.
    \begin{equation*}
        \begin{array}{rl}
            y_{1,[-4]} \hspace{-0.2cm}& =\zeta_{1,[-2]} \\
            y_{1,[-3]} \hspace{-0.2cm} & = \zeta_{1,[-1]} \\
            y_{1,[-2]} \hspace{-0.2cm} & = -v_{L, y} T_s+y_L \\
            y_{1,[-1]} \hspace{-0.2cm} & = y_L \\
            y_{1,[0]} \hspace{-0.2cm} & =  a_{L, y} T_s^2+v_{L, y} T_s+y_L \\[2ex]
        \end{array}
    \end{equation*}
    \begin{equation}\label{eq:gantry_crane_y_backward}
        \begin{array}{rl}
            y_{2,[-4]} \hspace{-0.2cm} & = \varphi_{2,[-4]}(\zeta_{1,[-2,-1]},x_L,y_L,v_{L, x},v_{L, y},\theta,\omega_\theta) \\
            y_{2,[-3]} \hspace{-0.2cm} & = \varphi_{2,[-3]}(\zeta_{1,[-1]},x_L,y_L,v_{L, x},v_{L, y},\theta) \\
            y_{2,[-2]} \hspace{-0.2cm} & = -v_{L, x} T_s+x_L \\
            y_{2,[-1]} \hspace{-0.2cm} & = x_L \\
            y_{2,[0]} \hspace{-0.2cm} & = T_s^2(g- a_{L, y}) \tan ( \alpha_\theta  T_s^2 +\omega_\theta T_s+\theta)\\
            & \hspace{4.5cm}+v_{L, x} T_s+x_L
        \end{array}
    \end{equation}
    with $\zeta=(\zeta^1,\zeta^2)=(\zeta_2,\zeta_1)=(\omega_\theta,-v_{L, y} T_s+y_L)$, $\tilde y=(\tilde y^1,\tilde y^2)=(y_2,y_1)$, $\varphi=(\varphi^1,\varphi^2)=(\varphi_2,\varphi_1)$, $R_1=(r_{1,1},r_{1,2})=(4,4)$. Since $y_{2,[-5]}=\varphi_{2,[-5]}(\zeta_{1,[-3,-1]},\zeta_{2,[-1]},x_L,y_L,v_{L,x},v_{L,y},\theta,\omega_\theta)$, the variables $\zeta_{1,[-1]}$ and $\zeta_{2,[-1]}$ first appear in $\varphi_{1,[-3]}$ and $\varphi_{2,[-5]}$, respectively. Hence, the multi-index is given by $\lambda=(\lambda_1,\lambda_2)=(3,5)$.
    The linearizing feedback \eqref{eq:linearizing_feedback_reduced} then reads as
    \begin{equation}\label{eq:gantry_crane_lin_feedback}
    \begin{aligned}
        a_{L,y} & = \frac{v_1-y_L-v_{L,y} T_s}{T_s^2}\\
        \alpha_\theta & = \frac{-\arctan (\frac{x_L+v_{L,x} T_s-v_2}{T_s^2 g+v_{L,y} T_s+y_L-v_1})-\theta-\omega_\theta T_s}{T_s^2}
    \end{aligned}
    \end{equation}
    with the input-output behavior of the closed-loop system given by $y_{1,[0]}=v_1$ and $y_{2,[0]}=v_2$, where $v=(v_1,v_2)$.\\
    As mentioned in Section \ref{sec:three}, the choice $v_1=y_1^d$, $v_2=y_2^d$ yields dead-beat control with error dynamics of order $0$. As can be seen in Figure \ref{fig:Tracking_performance_1}, the initial errors in the variables $x_L$ and $y_L$ are driven to zero in a single step, and the reference trajectory is tracked perfectly thereafter. This, however, comes at the cost of large control inputs $\bar u$.
    \begin{figure}[h!]
	    \noindent
        \begin{centering}
    		\includegraphics[width=0.95\columnwidth]{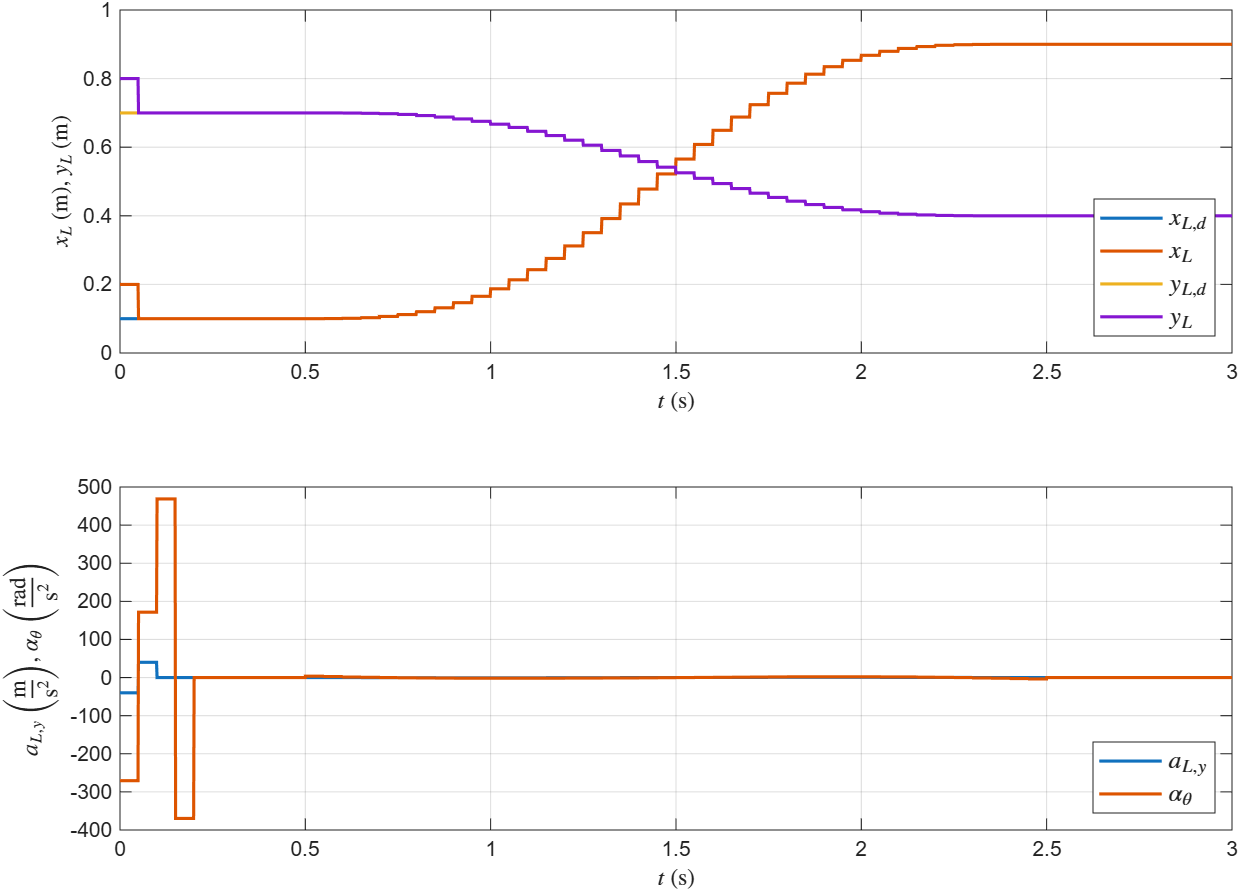}
    		\vspace{-1ex}
    		\caption{Tracking performance for discrete-time system \eqref{eq:sampled_data_system_gantry_crane} with input $\bar u=(a_{L,y},\alpha_\theta)$, sampling time of $T_s=50~\mathrm{ms}$ and $A=0$}
            \label{fig:Tracking_performance_1}
		\par\end{centering}
	\end{figure}
    
    Next, we discuss the case $0<A<\lambda-1$, where eigenvalues of the tracking error dynamics are located at the origin of the complex plane by, e.g., choosing $A=(2,3)$. As illustrated in Figure \ref{fig:Tracking_performance_2}, an appropriate choice of the coefficients $a_i^l$ results in good tracking performance, at the expense of large control inputs.\\
    \begin{figure}[h!]
	    \noindent
        \begin{centering}
    		\includegraphics[width=0.95\columnwidth]{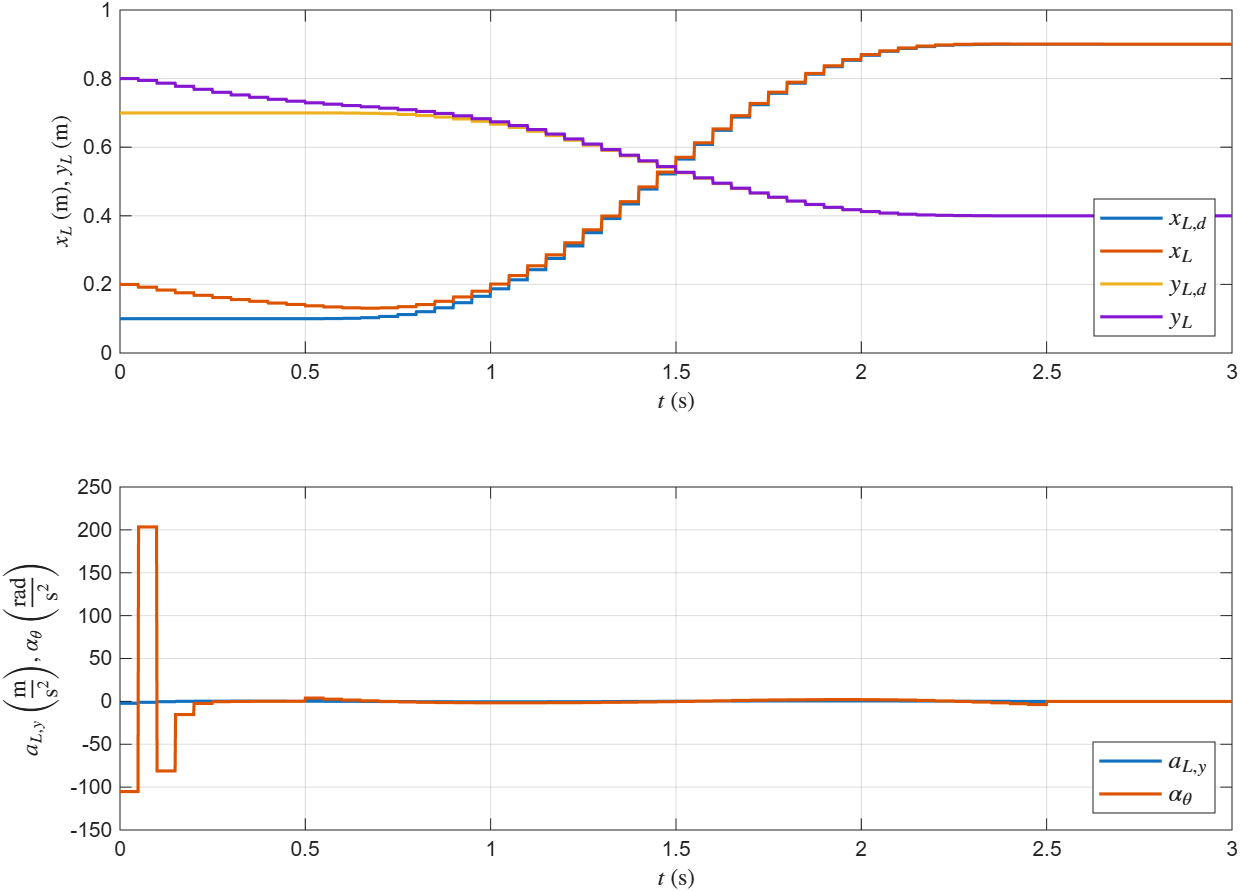}
    		\vspace{-1ex}
    		\caption{Tracking performance for discrete-time system \eqref{eq:sampled_data_system_gantry_crane} with input $\bar u=(a_{L,y},\alpha_\theta)$, sampling time of $T_s=50~\mathrm{ms}$ and $0<A<\lambda-1$}
            \label{fig:Tracking_performance_2}
		\par\end{centering}
	\end{figure}

    Finally, we consider the case $A=\lambda-1$, for which a stabilizing control law is given by 
    \begin{equation}\label{eq:stab_control_law_gantry_crane}
    \begin{aligned}
        v_1 & = y_{1,[0]}^{d}-\sum_{l=-1}^{-2} a_1^{l}(y_{1,[l]}-y_{1,[l]}^{d}) \\
        v_2 & = y_{2,[0]}^{d}-\sum_{l=-1}^{-4} a_2^{l}(y_{2,[l]}-y_{2,[l]}^{d})
    \end{aligned}
    \end{equation}
    with $\lambda=(3,5)$ and suitably chosen coefficients $a^l_i$.
    Substituting the map \eqref{eq:gantry_crane_y_backward} into the control law \eqref{eq:stab_control_law_gantry_crane} and inserting the resulting expression into \eqref{eq:gantry_crane_lin_feedback} yields a tracking control law of the form
    \begin{equation*}
    \begin{aligned}
        a_{L,y} & = \beta_1(y_L,v_{L, y},y^d_{1,[-2,0]})\\
        \alpha_\theta & = \beta_2(x_L,y_L,v_{L, x},v_{L, y},\theta,\omega_\theta, \\
        & \hspace{2cm} \zeta_{1,[-2,-1]},y^d_{1,[-2,0]},y^d_{2,[-4,0]})\,.
    \end{aligned}
    \end{equation*}
    As already mentioned in Section \ref{subsec:flatness_preserving_disc}, these inputs are related to the original inputs of the continuous-time system \eqref{eq:sys_gantry_crane_cont} via the map $\bar u=\Phi_u(x^+,u)$. Thus, the original input can be derived from the inverse mapping, i.e., $u=(F, M)=\hat\Phi_u(y_L^+, v_{L, y}^+, \theta^+, \omega_\theta^+, a_{L, y}, \alpha_\theta)$. The resulting tracking control law achieves the tracking performance illustrated in Figure \ref{fig:Tracking_performance_3} for the continuous-time system \eqref{eq:sys_gantry_crane_cont}, with control inputs that can realistically be provided by appropriately sized actuators.
    \begin{figure}[h!]
	    \noindent
        \begin{centering}
    		\includegraphics[width=0.95\columnwidth]{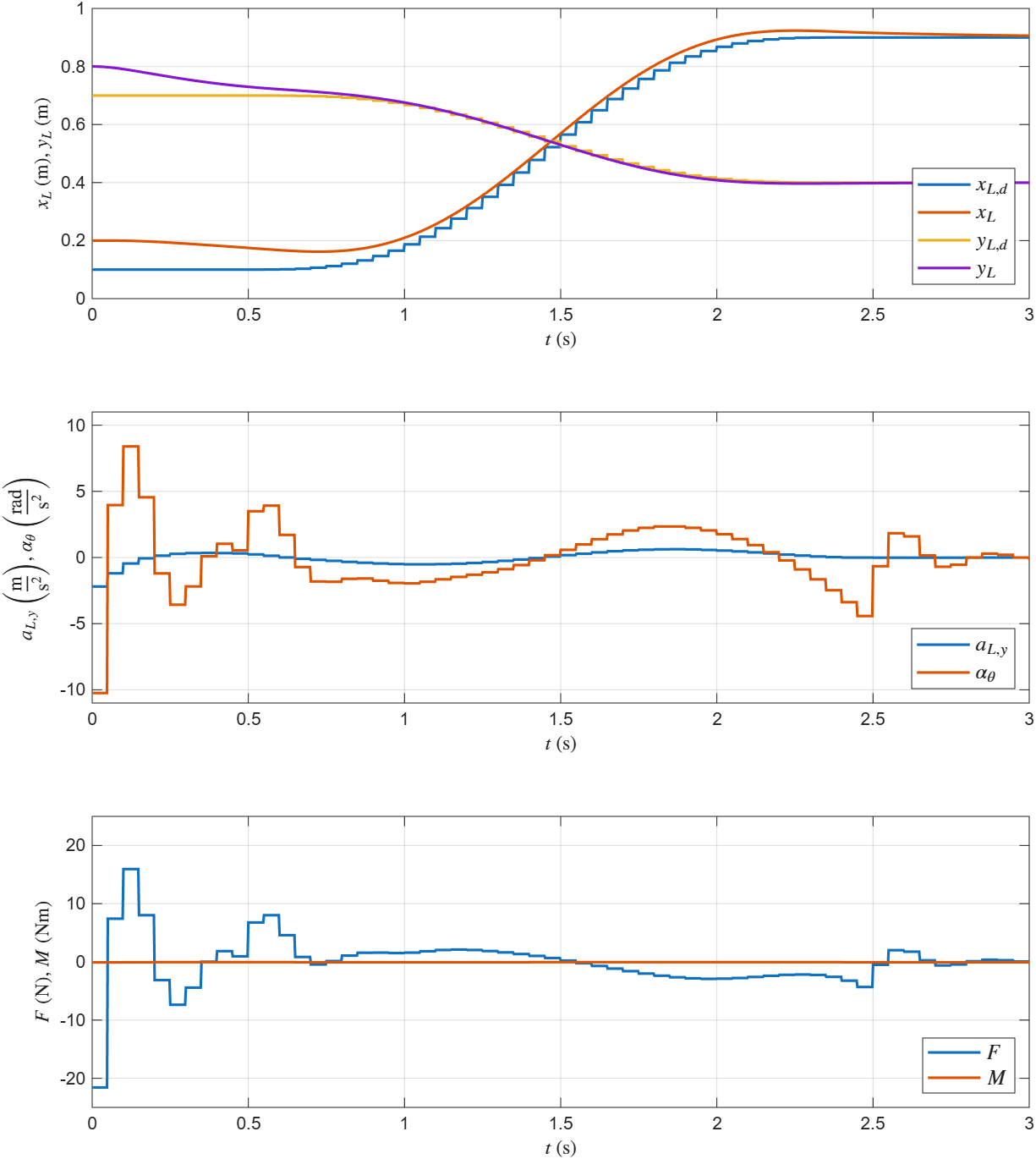}
    		\vspace{-1ex}
    		\caption{Tracking performance for continuous-time system \eqref{eq:sys_gantry_crane_cont} with transformed input $\bar u=(a_{L,y},\alpha_\theta)$, original input $u=(F,M)$, sampling time of $T_s=50~\mathrm{ms}$ and $A=\lambda-1$ }
            \label{fig:Tracking_performance_3}
		\par\end{centering}
	\end{figure}


\bibliographystyle{IEEEtran}

\bibliography{IEEEabrv,bibliography_Johannes}             

                 
\end{document}